\tikzset{ext/.style={circle, draw,inner sep=1pt},int/.style={circle,draw,fill,inner sep=1.4pt},nil/.style={inner sep=1pt}}
\tikzset{cy/.style={circle,draw,fill,inner sep=2pt},scy/.style={circle,draw,inner sep=2pt},scyx/.style={draw,cross out,inner sep=2pt},scyt/.style={draw,regular polygon,regular polygon sides=3,inner sep=0.95pt}}
\tikzset{exte/.style={circle, draw,inner sep=3pt},inte/.style={circle,draw,fill,inner sep=3pt}}
\tikzset{diagram/.style={matrix of math nodes, row sep=3em, column sep=2.5em, text height=1.5ex, text depth=0.25ex}}
\tikzset{diagram2/.style={matrix of math nodes, row sep=0.5em, column sep=0.5em, text height=1.5ex, text depth=0.25ex}}
\tikzset{
  rightblue/.style={
    decoration={markings,mark=at position .8 with {\arrow[scale=1.2,blue]{latex}}},
    postaction={decorate},begin
    shorten >=0.4pt}}
\tikzset{
  leftblue/.style={
    decoration={markings,mark=at position .6 with {\arrowreversed[scale=1.2,blue]{latex}}},
    postaction={decorate},
    shorten >=0.4pt}}
\tikzset{
  rightred/.style={
    decoration={markings,mark=at position .4 with {\arrow[scale=1.2,red]{latex}}},
    postaction={decorate},
    shorten >=0.4pt}}
\tikzset{
  leftred/.style={
    decoration={markings,mark=at position .2 with {\arrowreversed[scale=1.2,red]{latex}}},
    postaction={decorate},
    shorten >=0.4pt}}
\tikzset{
  crossed/.style={
    decoration={markings,mark=at position .5 with {\arrow{|}}},
    postaction={decorate},
    shorten >=0.4pt}}
\newcommand{\tto}[1]{{\overset{#1}{\longrightarrow}}}
\theoremstyle{plain}
  \newtheorem{thm}[figure]{Theorem}
  \newtheorem{prop}[figure]{Proposition}
  \newtheorem{cor}[figure]{Corollary}
\theoremstyle{definition}
  \newtheorem{rem}[figure]{Remark}
\newcommand{\K}{{\mathbb{K}}}
\newcommand{\sym}{{\mathbb{S}}}
\newcommand{\GC}{\mathrm{GC}}
\newcommand{\HGC}{\mathrm{HGC}}
\newcommand{\fHGCc}{\mathrm{fHGCc}}
\newcommand{\coH}{{\mathcal{H}}}
\newcommand{\mV}{\mathrm{V}}
\newcommand{\mE}{\mathrm{E}}
\newcommand{\mH}{\mathrm{H}}
\newcommand{\mB}{\mathrm{B}}
\DeclareMathOperator{\sgn}{sgn}
\newcommand{\grac}{\mathrm{grac}}
\begin{document}
\title{Second extra differential on odd graph complexes}

\author{Marko \v Zivkovi\' c}
\address{Mathematics Research Unit, University of Luxembourg\\ 
Maison du Nombre, 6, avenue de la Fonte, L-4364 Esch-sur-Alzette\\
Grand Duchy of Luxembourg}



\begin{abstract}
A suitable extra differential on graph complexes can lead to a pairing of its cohomological classes. Many such extra differentials are known for various graph complexes, including Kontsevich's graph complex $\GC_n$ for odd $n$. In this paper we introduce another extra differential on the same graph complex, leading to another way of pairing of its cohomological classes. Two ways of pairing lead to even further understanding of graph cohomology through ``waterfall mechanism''.
\end{abstract}

\maketitle

\section{Introduction}

Generally speaking, graph complexes are graded vector spaces of formal linear combinations of isomorphism classes of some kind of graphs, with the standard differential defined by vertex splitting (or, dually, edge contraction). The various graph cohomology theories are arguably some of the most fascinating objects in homological algebra. Each of graph complexes play a certain role in a subfield of homological algebra, algebraic topology or mathematical physics. They have an elementary and simple combinatorial definition, yet we know little about what their cohomology actually is.

The simplest graph complex is introduced by Maxim Kontsevich in \cite{Kont1,Kont2}. It comes in versions $\GC_n$, where $n$ ranges over integers. The differential $\delta$ splits a vertex (see Section \ref{s:def} for the definition). Physically, $\GC_n$ is formed by vacuum Feynman diagrams of a topological field theory in dimension $n$. Alternatively, $\GC_n$ governs the deformation theory of $E_n$ operads in algebraic topology \cite{grt} or stable cohomology of the algebraic polyvector fields \cite{Poly}. Some examples of graphs are:
$$
\begin{tikzpicture}[baseline=-1ex,scale=.7]
\node[int] (a) at (0,0) {};
\node[int] (b) at (1,0) {};
\draw (a) edge (b);
\draw (a) edge[bend left=40] (b);
\draw (a) edge[bend right=40] (b);
\end{tikzpicture}
\;,\quad
\begin{tikzpicture}[baseline=-1ex,scale=.7]
\node[int] (a) at (0,0) {};
\node[int] (b) at (90:1) {};
\node[int] (c) at (210:1) {};
\node[int] (d) at (330:1) {};
\draw (a) edge (b);
\draw (a) edge (c);
\draw (a) edge (d);
\draw (b) edge (c);
\draw (b) edge (d);
\draw (c) edge (d);
\end{tikzpicture}
\;.
$$

Kontsevich's graph complexes split into the product of sub-complexes with fixed loop order $b$:
$$
\GC_n=\prod_{b\geq -1}\mB_b\GC_n.
$$
Furthermore, the complexes $\GC_n$ and $\GC_{n'}$ are isomorphic up to some unimportant degree shifts if $m\equiv m' \text{ mod } 2$, (compare Tables \ref{tbl:GC3} and \ref{tbl:GC1} for an example). Hence it suffices to understand 2 possible cases of $\GC_n$: \emph{even graph complex} for even $n$, and \emph{odd graph complex} for odd $n$.

The long standing open problem we are attacking in this paper is the following.

\smallskip

{\bf Open Problem:} Compute the cohomology $\coH\left(\GC_n\right)$.

\smallskip

In this paper we consider only odd graph complexes. Dimensions of $\coH\left(\GC_3\right)$ are given in Table \ref{tbl:GC3}. As depicted in the table, there are areas where cohomology is known. However, there is a huge area where we still do not know anything.
A notable result about $\coH(\GC_{3})$ is known for its degree $-3$. Graphs that span $\coH^{-3}(\GC_{3})$ form the commutative algebra of 3-graphs \cite{DKC}, c.f.\ \cite{vogel}.
These are graphs whose all vertices are 3-valent, and the commutative product between two graphs deletes one vertex from each graph and connects loose edges from one graph to loose edges from another graph in all possible ways. It turns out that the choice of vertices does not change the result.

\begin{table}[h]
$$
\begin{tikzpicture}[every node/.style={align=center,anchor=base,text width=3ex,nodes={fill=red!30}}]
\matrix (mag) [matrix of nodes,ampersand replacement=\&,row 1/.style={nodes={fill=white}},row 3/.style={nodes={fill=green!30}},column 1/.style={nodes={fill=white}}]
{
{} \&-12 \&-11 \&-10 \&-9 \&-8 \&-7 \&-6 \&-5 \&-4 \&-3 \&-2 \&-1 \& 0 \& 1 \& 2 \& 3 \& 4 \& 5\\
-1\&  \&  \&  \&  \&  \&  \&  \&  \&  \&  \&  \&  \& 0 \& 0 \& 0 \& 0 \& 0 \& 0 \\
|[fill=white]| 0 \&  \&  \&  \&  \&  \&  \&  \&   \&  \&  \& 0 \& 0 \& 1 \& 0 \& 0 \& 0 \& 1 \& 0 \\
1 \&  \&  \&  \&  \&  \&  \&  \&  \& 0 \& |[fill=green!30]| 1 \& 0 \& 0 \& 0 \& 0 \& 0 \& 0 \& 0 \& 0 \\
2 \&  \&  \& \&  \&  \&  \& 0 \& 0 \& 0 \& |[fill=green!30]| 1 \& 0 \& 0 \& 0 \& 0 \& 0 \& 0 \& 0 \& 0\\
3 \& \&  \&  \&  \& 0 \& 0 \& 0 \& 0 \& |[fill=white]| 0 \& |[fill=green!30]| 1 \& 0 \& 0 \& 0 \& 0 \& 0 \& 0 \& 0 \& 0 \\
4 \&  \&  \& 0 \& 0 \& 0 \& 0 \& 0 \& |[fill=white]| 0 \& |[fill=white]| 0 \& |[fill=green!30]| 2 \& 0 \& 0 \& 0 \& 0 \& 0 \& 0 \& 0 \& 0 \\
5 \& 0 \& 0 \& 0 \& 0 \& 0 \& 0 \& |[fill=white]| 1 \& |[fill=white]| 0 \& |[fill=white]| 0 \& |[fill=green!30]| 2 \& 0 \& 0 \& 0 \& 0 \& 0 \& 0 \& 0 \& 0 \\
6 \& 0 \& 0 \& 0 \& 0 \& 0 \& |[fill=white]| 0 \& |[fill=white]| 1 \& |[fill=white]| 0 \& |[fill=white]| 0 \& |[fill=green!30]| 3 \& 0 \& 0 \& 0 \& 0 \& 0 \& 0 \& 0 \& 0 \\
7 \& 0 \& 0 \& 0 \& 0 \& |[fill=white]| 0 \& |[fill=white]| 0 \& |[fill=white]| 2 \& |[fill=white]| 0 \& |[fill=white]| 0 \& |[fill=green!30]| 4 \& 0 \& 0 \& 0 \& 0 \& 0 \& 0 \& 0 \& 0 \\
8 \& 0 \& 0 \& 0 \& |[fill=white]| 0 \& |[fill=white]| ? \& |[fill=white]| ? \& |[fill=white]| ? \& |[fill=white]| ? \& |[fill=white]| ? \& |[fill=green!30]| 5 \& 0 \& 0 \& 0 \& 0 \& 0 \& 0 \& 0 \& 0 \\
9 \& 0 \& 0 \& |[fill=white]| ? \& |[fill=white]| ? \& |[fill=white]| ? \& |[fill=white]| ? \& |[fill=white]| ? \& |[fill=white]| ? \& |[fill=white]| ? \& |[fill=green!30]| 6 \& 0 \& 0 \& 0 \& 0 \& 0 \& 0 \& 0 \& 0 \\
10 \& 0 \& |[fill=white]| ? \& |[fill=white]| ? \& |[fill=white]| ? \& |[fill=white]| ? \& |[fill=white]| ? \& |[fill=white]| ? \& |[fill=white]| ? \& |[fill=white]| ? \& |[fill=green!30]| 8 \& 0 \& 0 \& 0 \& 0 \& 0 \& 0 \& 0 \& 0 \\
11\& |[fill=white]| ? \& |[fill=white]| ? \& |[fill=white]| ? \& |[fill=white]| ? \& |[fill=white]| ? \& |[fill=white]| ? \& |[fill=white]| ? \& |[fill=white]| ? \& |[fill=white]| ? \& |[fill=green!30]| 9 \& 0 \& 0 \& 0 \& 0 \& 0 \& 0 \& 0 \& 0 \\
};
\draw (mag-2-1.north west) -- (mag-1-19.south east);
\draw (mag-1-2.north west) -- (mag-14-1.south east);
\end{tikzpicture}
$$
\caption{\label{tbl:GC3} The table of dimensions of $\coH(\GC_{3})$. The columns represent the degree, and rows represent the loop order $b:=e-v$. All dimensions in this table are known by computer calculations of Bar Natan and McKay \cite{barnatanmk}, and Willwacher.
Empty white area represents cases where there are no graphs.
Red area has zero cohomology because of various theoretical results (c.f.\ \cite{grt}, \cite[Lemma 1.4]{Wrange}).
Horizontal green line in loop order $0$ corresponds to loop classes (c.f.\ \cite{grt}).
Vertical green line in degree $-3$ forms an commutative algebra of 3-graphs \cite{DKC}.}
\end{table}

Cohomology of another odd graph complex $\coH\left(\GC_1\right)$ is obtained from $\coH\left(\GC_3\right)$ by various shifting of its fixed loop order sub-graphs, see Table \ref{tbl:GC1}.
Another result is known for $\coH\left(\GC_1\right)$, dealing with \emph{extra differentials}, \cite[Section 4]{DGC1}.
The basic idea is to deform the standard differential $\delta$ to $\delta+\delta^{extra}$ making the complex (almost) acyclic.
The extra differential for $\GC_1$ is the Lie bracket with Maurer-Cartan element
$$
 m = 
\sum_{j\geq 1}
\frac{1}{(2j+1)!} \,
\begin{tikzpicture}[baseline=-.65ex,scale=.8]
 \node[int] (v) at (0,.5) {};
 \node[int] (w) at (0,-0.5) {};
 \draw (v) edge[very thick] node[right] {$\scriptstyle 2j+1$} (w);
\end{tikzpicture},
$$
where the thick line labeled by a number $2j+1$ represents a $2j+1$-fold edge, i.e., $2j+1$ edges connecting the same pair of vertices.

As the extra piece does not fix the loop order, the condition that extra piece is of degree 1 makes it suitable only for one odd parameter $n$, in this case $n=1$.

A spectral sequence on the loop order can be found such that the standard differential $\delta$ is the first differential. Therefore, on the first page of the spectral sequence we see the standard cohomology $\coH\left(\GC_1,\delta\right)$ we are interested in, and because the whole differential is acyclic, classes of it cancel with each other on further pages. We call this the \emph{cancelling mechanism}. We can say that classes come in pairs. Some cancellations of pairs, including some conjectural, are depicted as arrows in Table \ref{tbl:GC1}. It is conjectured in \cite[Conjecture 1]{DGC1} that loop classes are cancelled with elements of (shifted) algebra of 3-graphs.

\begin{table}[h]
$$
\begin{tikzpicture}[every node/.style={align=center,anchor=base,text width=2.15ex,nodes={fill=red!30}}]
\matrix (mag) [matrix of nodes,ampersand replacement=\&,row 1/.style={nodes={fill=white}},row 3/.style={nodes={fill=green!30}},column 1/.style={nodes={fill=white}}]
{
{}\& 0 \& 1 \& 2 \& 3 \& 4 \& 5 \& 6 \& 7 \& 8 \& 9 \&10 \&11 \&12 \&13 \&14 \&15 \&16 \&17 \&18 \&19 \&20 \&21 \\
-1\& 0 \& 0 \& 0 \& 0 \& 0 \& 0 \& 0 \& 0 \& 0 \& 0 \& 0 \& 0 \& 0 \& 0 \& 0 \& 0 \& 0 \& 0 \& 0 \& 0 \& 0 \& 0 \\
|[fill=white]| 0 \& 0 \& 0 \& 1 \& 0 \& 0 \& 0 \& 1 \& 0 \& 0 \& 0 \& 1 \& 0 \& 0 \& 0 \& 1 \& 0 \& 0 \& 0 \& 1 \& 0 \& 0 \& 0 \\
1 \& 0 \& |[fill=green!30]| 1 \& 0 \& 0 \& 0 \& 0 \& 0 \& 0 \& 0 \& 0 \& 0 \& 0 \& 0 \& 0 \& 0 \& 0 \& 0 \& 0 \& 0 \& 0 \& 0 \& 0 \\
2 \& 0 \& 0 \& 0 \& |[fill=green!30]| 1 \& 0 \& 0 \& 0 \& 0 \& 0 \& 0 \& 0 \& 0 \& 0 \& 0 \& 0 \& 0 \& 0 \& 0 \& 0 \& 0 \& 0 \& 0 \\
3 \& 0 \& 0 \& 0 \& 0 \& |[fill=white]| 0 \& |[fill=green!30]| 1 \& 0 \& 0 \& 0 \& 0 \& 0 \& 0 \& 0 \& 0 \& 0 \& 0 \& 0 \& 0 \& 0 \& 0 \& 0 \& 0 \\
4 \& 0 \& 0 \& 0 \& 0 \& 0 \& |[fill=white]| 0 \& |[fill=white]| 0 \& |[fill=green!30]| 2 \& 0 \& 0 \& 0 \& 0 \& 0 \& 0 \& 0 \& 0 \& 0 \& 0 \& 0 \& 0 \& 0 \& 0 \\
5 \& 0 \& 0 \& 0 \& 0 \& 0 \& 0 \& |[fill=white]| 1 \& |[fill=white]| 0 \& |[fill=white]| 0 \& |[fill=green!30]| 2 \& 0 \& 0 \& 0 \& 0 \& 0 \& 0 \& 0 \& 0 \& 0 \& 0 \& 0 \& 0 \\
6 \& 0 \& 0 \& 0 \& 0 \& 0 \& 0 \& 0 \& |[fill=white]| 0 \& |[fill=white]| 1 \& |[fill=white]| 0 \& |[fill=white]| 0 \& |[fill=green!30]| 3 \& 0 \& 0 \& 0 \& 0 \& 0 \& 0 \& 0 \& 0 \& 0 \& 0 \\
7 \& 0 \& 0 \& 0 \& 0 \& 0 \& 0 \& 0 \& 0 \& |[fill=white]| 0 \& |[fill=white]| 0 \& |[fill=white]| 2 \& |[fill=white]| 0 \& |[fill=white]| 0 \& |[fill=green!30]| 4 \& 0 \& 0 \& 0 \& 0 \& 0 \& 0 \& 0 \& 0 \\
8 \& 0 \& 0 \& 0 \& 0 \& 0 \& 0 \& 0 \& 0 \& 0 \& |[fill=white]| 0 \& |[fill=white]| ? \& |[fill=white]| ? \& |[fill=white]| ? \& |[fill=white]| ? \& |[fill=white]| ? \& |[fill=green!30]| 5 \& 0 \& 0 \& 0 \& 0 \& 0 \& 0 \\
9 \& 0 \& 0 \& 0 \& 0 \& 0 \& 0 \& 0 \& 0 \& 0 \& 0 \& |[fill=white]| ? \& |[fill=white]| ? \& |[fill=white]| ? \& |[fill=white]| ? \& |[fill=white]| ? \& |[fill=white]| ? \& |[fill=white]| ? \& |[fill=green!30]| 6 \& 0 \& 0 \& 0 \& 0 \\
10\& 0 \& 0 \& 0 \& 0 \& 0 \& 0 \& 0 \& 0 \& 0 \& 0 \& 0 \& |[fill=white]| ? \& |[fill=white]| ? \& |[fill=white]| ? \& |[fill=white]| ? \& |[fill=white]| ? \& |[fill=white]| ? \& |[fill=white]| ? \& |[fill=white]| ? \& |[fill=green!30]| 8 \& 0 \& 0 \\
11\& 0 \& 0 \& 0 \& 0 \& 0 \& 0 \& 0 \& 0 \& 0 \& 0 \& 0 \& 0 \& |[fill=white]| ? \& |[fill=white]| ? \& |[fill=white]| ? \& |[fill=white]| ? \& |[fill=white]| ? \& |[fill=white]| ? \& |[fill=white]| ? \& |[fill=white]| ? \& |[fill=white]| ? \& |[fill=green!30]| 9 \\
};

\draw (mag-3-4) edge[-latex, thick] (mag-5-5);
\draw (mag-6-7) edge[-latex, thick] (mag-8-8);
\draw (mag-3-8) edge[-latex, thick] (mag-7-9);
\draw (mag-7-9) edge[-latex, thick] (mag-9-10);
\draw (mag-8-11) edge[-latex, thick] node[above]{$?$} (mag-10-12);
\draw (mag-3-12) edge[-latex, thick] node[above]{$?$} (mag-9-13);
\draw (mag-3-16) edge[-latex, thick] node[above]{$?$} (mag-11-17);
\draw (mag-3-20) edge[-latex, thick] node[above]{$?$} (mag-13-21);

\draw (mag-2-1.north west) -- (mag-1-23.south east);
\draw (mag-1-2.north west) -- (mag-14-1.south east);
\end{tikzpicture}
$$
\caption{\label{tbl:GC1} The table of dimensions of $\coH(\GC_{1})$.
Note that rows are the same as in $\coH(\GC_{3})$ from Table \ref{tbl:GC3}, just with shifted degree.
Arrows represent cancellations using the extra differential $[m,\cdot]$, c.f.\ \cite[Table 2]{DGC1}. Question-marks on the arrows indicate that the cancellation is conjectural.
}
\end{table}

Further shifting leads to the next odd graph cohomology $\coH\left(\GC_{-1}\right)$ depicted in Table \ref{tbl:GC-1}. One can immediately see that classes are arranged in pairs as indicated by arrows, as if there is another extra differential suitable for the parameter $n=-1$ that cancels them.

A possible extra differential was proposed in \cite[Conjecture 2]{DGC3}. It is a differential $D$ that deletes a vertex, and reconnects its edges to other vertices, summed over all ways to attach them, and summed over all vertices to be deleted. Calculations on low degrees support the conjecture.

The purpose of this paper is to give an extra differential on $\coH\left(\GC_{-1}\right)$ that makes conjectural cancelling from Table \ref{tbl:GC-1} real. However, it is not $D$ from \cite[Conjecture 2]{DGC3}, and the conjecture remains, though of lesser importance since our result gives the cancelling we were looking for.

\begin{table}[h]
{\small $$
\begin{tikzpicture}[every node/.style={align=center,anchor=base,text width=2.15ex,nodes={fill=red!30}}]
\matrix (mag) [matrix of nodes,ampersand replacement=\&,row 1/.style={nodes={fill=white}},row 3/.style={nodes={fill=green!30}},column 1/.style={nodes={fill=white}}]
{
{}\& 0 \& 1 \& 2 \& 3 \& 4 \& 5 \& 6 \& 7 \& 8 \& 9 \&10 \&11 \&12 \&13 \&14 \&15 \&16 \&17 \&18 \&19 \&20 \&21 \&22 \&23 \&24 \&25 \&26 \&27 \\
-1\& 0 \& 0 \& 0 \& 0 \& 0 \& 0 \& 0 \& 0 \& 0 \& 0 \& 0 \& 0 \& 0 \& 0 \& 0 \& 0 \& 0 \& 0 \& 0 \& 0 \& 0 \& 0 \& 0 \& 0 \& 0 \& 0 \& 0 \& 0 \\
|[fill=white]| 0 \&  \&  \& 0 \& 0 \& 1 \& 0 \& 0 \& 0 \& 1 \& 0 \& 0 \& 0 \& 1 \& 0 \& 0 \& 0 \& 1 \& 0 \& 0 \& 0 \& 1 \& 0 \& 0 \& 0 \& 1 \& 0 \& 0 \& 0 \\
1 \&  \&  \&  \&  \& 0 \& |[fill=green!30]| 1 \& 0 \& 0 \& 0 \& 0 \& 0 \& 0 \& 0 \& 0 \& 0 \& 0 \& 0 \& 0 \& 0 \& 0 \& 0 \& 0 \& 0 \& 0 \& 0 \& 0 \& 0 \& 0 \\
2 \&  \&  \&  \&  \&  \&  \& 0 \& 0 \& 0 \& |[fill=green!30]| 1 \& 0 \& 0 \& 0 \& 0 \& 0 \& 0 \& 0 \& 0 \& 0 \& 0 \& 0 \& 0 \& 0 \& 0 \& 0 \& 0 \& 0 \& 0 \\
3 \&  \&  \&  \&  \&  \&  \&  \&  \& 0 \& 0 \& 0 \& 0 \& |[fill=white]| 0 \& |[fill=green!30]| 1 \& 0 \& 0 \& 0 \& 0 \& 0 \& 0 \& 0 \& 0 \& 0 \& 0 \& 0 \& 0 \& 0 \& 0 \\
4 \&  \&  \&  \&  \&  \&  \&  \&  \&  \&  \& 0 \& 0 \& 0 \& 0 \& 0 \& |[fill=white]| 0 \& |[fill=white]| 0 \& |[fill=green!30]| 2 \& 0 \& 0 \& 0 \& 0 \& 0 \& 0 \& 0 \& 0 \& 0 \& 0 \\
5 \&  \&  \&  \&  \&  \&  \&  \&  \&  \&  \&  \&  \& 0 \& 0 \& 0 \& 0 \& 0 \& 0 \& |[fill=white]| 1 \& |[fill=white]| 0 \& |[fill=white]| 0 \& |[fill=green!30]| 2 \& 0 \& 0 \& 0 \& 0 \& 0 \& 0 \\
6 \&  \&  \&  \&  \&  \&  \&  \&  \&  \&  \&  \&  \&  \&  \& 0 \& 0 \& 0 \& 0 \& 0 \& 0 \& 0 \& |[fill=white]| 0 \& |[fill=white]| 1 \& |[fill=white]| 0 \& |[fill=white]| 0 \& |[fill=green!30]| 3 \& 0 \& 0 \\
7 \&  \&  \&  \&  \&  \&  \&  \&  \&  \&  \&  \&  \&  \&  \&  \&  \& 0 \& 0 \& 0 \& 0 \& 0 \& 0 \& 0 \& 0 \& |[fill=white]| 0 \& |[fill=white]| 0 \& |[fill=white]| 2 \& |[fill=white]| 0 \\
8 \&  \&  \&  \&  \&  \&  \&  \&  \&  \&  \&  \&  \&  \&  \&  \&  \&  \&  \& 0 \& 0 \& 0 \& 0 \& 0 \& 0 \& 0 \& 0 \& 0 \& |[fill=white]| 0 \\
9 \&  \&  \&  \&  \&  \&  \&  \&  \&  \&  \&  \&  \&  \&  \&  \&  \&  \&  \&  \&  \& 0 \& 0 \& 0 \& 0 \& 0 \& 0 \& 0 \& 0 \\
10 \&  \&  \&  \&  \&  \&  \&  \&  \&  \&  \&  \&  \&  \&  \&  \&  \&  \&  \&  \&  \&  \& \& 0 \& 0 \& 0 \& 0 \& 0 \& 0 \\
11 \&  \&  \&  \&  \&  \&  \&  \&  \&  \&  \&  \&  \&  \&  \&  \&  \&  \&  \&  \&  \&  \&  \&  \& \& 0 \& 0 \& 0 \& 0 \\
};

\draw (mag-3-6.center) edge[-latex, thick] (mag-4-7);
\draw (mag-3-10.center) edge[-latex, thick] (mag-5-11);
\draw (mag-3-14.center) edge[-latex, thick] (mag-6-15);
\draw (mag-3-18.center) edge[-latex, thick] (mag-7-19);
\draw (mag-3-22.center) edge[-latex, thick] (mag-8-23);
\draw (mag-3-26.center) edge[-latex, thick] (mag-9-27);
\draw (mag-7-19.center) edge[-latex, thick] (mag-8-20);
\draw (mag-8-23.center) edge[-latex, thick] (mag-9-24);
\draw (mag-9-27.center) edge[-latex, thick] (mag-10-28);

\draw (mag-2-1.north west) -- (mag-1-29.south east);
\draw (mag-1-2.north west) -- (mag-14-1.south east);
\end{tikzpicture}
$$}
\caption{\label{tbl:GC-1} The table of dimensions of $\coH(\GC_{-1})$.
Conjectural cancelling of pending extra differential is indicated by arrows.
}
\end{table}

To do it we need to introduce \emph{hairy graph complexes} $\HGC_{m,n}$, where $m$ and $n$ range over integers. These complexes are spanned by graphs with external legs, also called ``hairs'' (see Section \ref{s:def} for the definition). At least one hair is required. Similarly to Kontsevich's graph complexes, there are essentially 4 hairy graph complexes that depend on the parities of $m$ and $n$. They compute the rational homotopy of the spaces of embeddings of disks modulo immersions, fixed at the boundary $ \overline{\mathrm{Emb}_\partial}(\mathbb{D}^m,\mathbb{D}^n)$, provided that $n-m\geq 3$, see \cite{AT,FTW}.
Some examples of hairy graphs are:
\begin{equation}
\label{eq:HGexample}
\begin{tikzpicture}[baseline=-1ex,scale=.7]
\node[int] (a) at (0,0) {};
\draw (a) edge (90:.3);
\draw (a) edge (120+90:.3);
\draw (a) edge (240+90:.3);
\end{tikzpicture}
\;,\quad
\begin{tikzpicture}[baseline=-2ex,scale=.7]
\node[int] (v1) at (-1,0){};
\node[int] (v2) at (0,.8){};
\node[int] (v3) at (1,0){};
\node[int] (v4) at (0,-.8){};
\draw (v1)  edge (v2);
\draw (v1)  edge (v4);
\draw (v2)  edge (v3);
\draw (v3)  edge (v4);
\draw (v4)  edge (v2);
\draw (v1)  edge (-1.3,0);
\draw (v3)  edge (1.3,0);
\end{tikzpicture}
\;,\quad
\begin{tikzpicture}[baseline=-1ex,scale=.7]
\node[int] (a) at (0,0) {};
\node[int] (b) at (90:1) {};
\node[int] (c) at (210:1) {};
\node[int] (d) at (330:1) {};
\draw (b) edge (0,1.3);
\draw (a) edge (b);
\draw (a) edge (c);
\draw (a) edge (d);
\draw (b) edge (c);
\draw (b) edge (d);
\draw (c) edge (d);
\end{tikzpicture}
\;.
\end{equation}

It is known \cite[Propositions 4.1 and 4.4]{vogel} that the first cohomology of the 2-hair subspace $\coH^{-1}(\mH^2\HGC_{1,3})$ and the first cohomology of the 3-hair subspace $\coH^1(\mH^3\HGC_{2,3})$ are each isomorphic to the commutative algebra of 3-graphs, i.e.\ third cohomology of the non-hairy graph complex $\coH^{-3}(\GC_3)$.

A couple of extra differentials is known for each of the four parity cases of hairy graph complexes. First, \cite{grt}, \cite{TW1} and \cite{TW2} introduce a deformed differentials on $\HGC_{n,n}$ and $\HGC_{n-1,n}$ such that there are quasi-isomorphisms
\begin{align*}
\GC_{n} &\to \left(\HGC_{n,n}, \delta+\chi\right), \\
\K \oplus \GC_{n} &\to \left(\HGC_{n-1,n}, \delta+[h_1,\cdot]\right).
\end{align*}
The map, and also the extra differential in the first case $\chi$, add a hair in all possible ways. A Maurer-Cartan element $h_1$ used for the extra differential in the second case is a particular sum of corollas.
In this paper we are dealing with $\GC_{-1}$, so we will use only the first result, see Subsection \ref{ss:chi}. Therefore, we are interested only in hairy graph complex $\HGC_{-1,-1}$.

The second extra differential on $\HGC_{m,n}$ for even $m$ is introduced in \cite{DGC2}. In that case, because of parities, it is possible to understand hairs as edges towards a special vertex. The extra differential is splitting that extra vertex.
For odd $n$ the second extra differential is proven in \cite{DGC3} to be $\Delta$ that connects a hair into an edge, see Subsection \ref{ss:Delta}.

The main idea of this paper is constructing
$$
\left(\mH_{\geq 0}\GC_{-1,-1},\delta+\Delta\right),
$$
where $\mH_{\geq 0}\GC_{-1,-1}$ is the hairy graph complex where graphs without hairs are also allowed.
Aforementioned results imply that it is quasi-isomorphic to $\GC_{-1}$.
The tempting extra differential on $\GC_{-1}$ will actually be the adding a hair differential $\chi$ on its quasi-isomorphic version $\left(\mH_{\geq 0}\GC_{-1,-1},\delta+\Delta\right)$.

Finally, we want to mention that having two extra differentials on the (essentially) same complex leads to the ``waterfall mechanism'' introduced for the first time in \cite{DGC2} for hairy graph complexes.
One starts from one cohomology class, finds its pair using one extra differential, then find its pair using another extra differential, then finds its pair using again the first extra differential, and so on. Weather this leads to an finite or infinite sequence of classes, or makes a loop, is an open question for each case. The picture of the ``waterfall mechanism'' for $\GC_{-1}$ is depicted in Table \ref{tbl:wfall}.

\subsection*{Structure of the paper}

In Section \ref{s:def} we define graph complexes needed in the paper. Subsections \ref{ss:Delta} and \ref{ss:chi} recalls two different extra differentials on hairy graph complex. In Subsection \ref{ss:main} we show our result.

\subsection*{Acknowledgements}

The author thanks Sergei Merkulov and Thomas Willwacher for the motivation, support and valuable discussion on the topic.

\section{Graph complexes}
\label{s:def}

In this section we quickly recall definitions of graph complexes needed in this paper that are all well known from the literature, and fix our notation.

Standard Kontsevich's graph complexes $\GC_n$ and hairy graph complexes $\HGC_{m,n}$ are in general defined e.g.\ in \cite{DGC3}.
In this paper we are only interested in the case when $m=n=-1$, that is $\GC_{-1}$ and $\HGC_{-1,-1}$. For simplicity, we will use the shorter notations $\GC:=\GC_{-1}$ and $\HGC:=\HGC_{-1,-1}$.

We work over a field $\K$ of characteristic zero. All vector spaces and differential graded vector spaces are assumed to be $\K$-vector spaces.

\subsection{From graphs to space of (co)invariants}
Consider the set $\bar\mV_v\bar\mE_e\bar\mH_s\grac$ containing directed graphs that:
\begin{itemize}
\item are connected;
\item have $v>0$ distinguishable vertices that are adjacent to at least 2 edges;
\item have $e\geq 0$ distinguishable directed edges;
\item have $s\geq 0$ distinguishable hairs attached to some vertices;
\end{itemize}
For some pictures of such graphs see \eqref{eq:HGexample}.

Let
\begin{equation}
\bar\mV_v\bar\mE_e\bar\mH_s\GC:=\langle\bar\mV_v\bar\mE_e\bar\mH_s\grac^{3}\rangle[-1+v-2e-s]
\end{equation}
be the vector space of formal series of $\bar\mV_v\bar\mE_e\bar\mH_s\grac$ with coefficients in $\K$. It is a graded vector space with a non-zero term only in degree $d=-1+v-2e-s$.

There is a natural right action of the group $\sym_v\times \sym_s\times \left(\sym_e\ltimes \sym_2^{\times e}\right)$ on $\bar\mV_v\bar\mE_e\bar\mH_s\grac$, where $S_v$ permutes vertices, $S_s$ permutes hairs, $S_e$ permutes edges and $S_2^{\times e}$ changes the direction of edges.
Let $\sgn_v$, $\sgn_s$, $\sgn_e$ and $\sgn_2$ be one-dimensional representations of $S_v$, respectively $S_s$, respectively $S_e$, respectively $S_2$, where the odd permutation reverses the sign. They can be considered as representations of the whole product $\sym_v\times \sym_s\times \left(\sym_e\ltimes \sym_2^{\times e}\right)$. Let us consider the space of invariants:
\begin{equation}
\mV_v\mE_e\mH_s\GC:=
\left(\bar\mV_v\bar\mE_e\bar\mH_s\GC\otimes\sgn_v\otimes\sgn_s\otimes\sgn_2^{\otimes e}\right)^{\sym_v\times \sym_s\times \left(\sym_e\ltimes \sym_2^{\times e}\right)}
\end{equation}
Because the group is finite, the space of invariants may be replaced by the space of coinvariants. The operation of taking (co)invarints effectively removes numbering on edges, and removes the edges directions and numberings of vertices and hairs up to sign.

\begin{rem}
Sign changes induced by reversing an edge direction and switching hairs imply that in $\mV_v\mE_e\mH_s\GC$ there are no tadpoles (edges that start and end at the same vertex), nor multiple hairs on the same vertex.
\end{rem}

\subsection{The differential}
\label{ss:dif}
The differential on $\mV_v\mE_e\mH_s\GC$ acts by splitting a vertex:
\begin{equation}
 \delta(\Gamma):=\sum_{x\in V(\Gamma)}\frac{1}{2}s_x(\Gamma),
\end{equation}
where $V(\Gamma)$ is the set of vertices of $\Gamma$, and $s_x$ stands for ``splitting of $x$'' and means inserting
\begin{tikzpicture}[scale=.5]
 \node[int] (a) at (0,0) {};
 \node[int] (b) at (1,0) {};
 \draw (a) edge (b);
\end{tikzpicture}
instead of the vertex $x$ and summing over all possible ways of connecting the edges that have been connected to $x$ to the new two vertices, requiring that at least one edge is connected to each of the new vertices.
One checks that the differential is well defined on the space of (co)invariants. Thus, for each $s\geq 0$ we get a graph complex
$$
\left(\mH_s\GC,\delta\right).
$$

\subsection{Full complexes}

Kontsevich's graph complex is
\begin{equation}
(\GC,\delta)=(\mH_0\GC,\delta),
\end{equation}
and hairy graph complex is
\begin{equation}
(\HGC,\delta)=\left(\prod_{s\geq 1}\mH_s\GC,\delta\right).
\end{equation}
We also need a combined complex
\begin{equation}
\left(\mH_{\geq 0}\GC,\delta\right)=\left(\prod_{s\geq 0}\mH_s\GC,\delta\right)=
(\GC,\delta)\oplus(\HGC,\delta).
\end{equation}

\begin{rem}
\label{rem:val}
Recall that each vertex in our graphs has to meet at least 2 edges. No tadpoles are allowed, so that means that the valence of each vertex of a graph in the complex $\GC=\GC_{-1}$ is at least 2. That complex is often called $\GC^2_{-1}$ in the literature.

Similarly, for hairy graphs in our complex $\HGC=\HGC_{-1,-1}$, each non-hairy vertex has to be at least 2-valent, and each hairy vertex has to be at least 3-valent, since there can not be more than one hair on a vertex. This valence convention is not so standard, but it is considered in \cite{DGC3}, where our complex $\HGC$ is called $\mH^{\geq 1}\fHGCc_{-1,-1}^\natural$.

It is well known and easy to check that these valence conditions do not change the cohomology essentially, c.f.\ \cite[Proposition 3.4]{grt}.
\end{rem}

\section{Extra differentials}

\subsection{Connecting a hair}
\label{ss:Delta}
In the project of finding extra differentials on graph complexes, an extra differential $\Delta$ on $\HGC_{m,n}$ for odd $m$ was conjectured in \cite{DGC2}:
\begin{equation}
\Delta\Gamma=\sum_{t\in H(\Gamma)}\Delta_t\Gamma,
\end{equation}
where $H(\Gamma)$ is the set of hairs in $\Gamma$ and $\Delta_t\Gamma$ is the graph obtained from $\Gamma$ by transforming hair $t$ on a vertex $x$ to an edge from $x$ to all other vertices, summed over all vertices. One checks that $\Delta$ is well defined on the space of (co)invariants. We have \cite[Lemma 1]{DGC2}:
\begin{equation}
\Delta^2=0,
\end{equation}
\begin{equation}
(\delta+\Delta)=0.
\end{equation}
On hairy complexes $\HGC_{-1,n}$, including our $\HGC=\HGC_{-1,-1}$, $\Delta$ is of degree 1, so upper equalities imply that $\Delta$ and $(\delta+\Delta)$ are differentials.

Extra differential $\Delta$ reduces number or hairs by 1, and it is naturally defined on $\mH_{\geq 0}\GC$, forming the complex
$$
\left(\mH_{\geq 0}\GC,\Delta\right).
$$
However, we will often need restricted and projected differential $\Delta:\HGC\rightarrow\HGC$ that forms a complex
$$
(\HGC,\Delta).
$$
Here, if a graph $\Gamma$ has only one hair, $\Delta\Gamma$ is considered to be zero.
In both upper complexes we may put differential $\delta+\Delta$ instead of $\Delta$.

The conjecture from \cite{DGC2} was proven in \cite[Theorem 1.1]{DGC3}. However, the convention on valences in this paper is slightly different from the one there, as discussed in Remark \ref{rem:val}, so we technically need an equivalent result given in \cite[Proposition 6.10]{DGC3}:

\begin{thm}[{\cite[Proposition 6.10]{DGC3}\footnote{Strictly speaking, \cite{DGC3} deals with the complex $\HGC_{-1,1}$. It is easy to see that the second index is irrelevant for the differential $\delta+\Delta$, so we can use that result for our $\HGC=\HGC_{-1,-1}$.}}]
\label{thm:DGC3}
The complex
$$
\left(\HGC,\delta+\Delta\right)
$$
is acyclic.
\end{thm}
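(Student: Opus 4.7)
I would attack the theorem by setting up a spectral sequence on $(\HGC,\delta+\Delta)$ via the filtration by vertex count $v$. Both $\delta$ and $\Delta$ are compatible with the increasing filtration $F_p=\{\Gamma:v(\Gamma)\le p\}$, because vertex splitting strictly raises $v$ by one while the hair-to-edges differential $\Delta$ preserves it. The valence convention (Remark~\ref{rem:val}) combined with $s\ge 1$ and the degree formula $d=-1+v-2e-s$ forces $v\le -d-2$ in any fixed cohomological degree, so the filtration is bounded on each degree and the spectral sequence converges.

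On the $E^0$-page only $\Delta$ survives, so one must compute the $\Delta$-cohomology of the subspace of graphs with a fixed vertex set $W$ of cardinality $p$. Before imposing the connectivity and valence restrictions, this subspace is the $\sgn_v$-invariant part of $\bigoplus_{s,e}\Lambda^s W\otimes S^e(\Lambda^2 W)$, and $\Delta$ is exactly the Koszul-type differential induced by the linear map $m\colon W\to \Lambda^2 W$, $u\mapsto u\wedge X$, with $X=\sum_{u\in W}u$, because the $\Delta$-image of a hair at $x$ equals $\sum_{y\ne x}x\wedge y=x\wedge X$. Since $\ker m=\K\cdot X$, the associated Koszul cohomology is small and supported on ``$X$-symmetric'' classes.

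The main obstacle is the $E^1$-step: one must show that the induced $\delta$-differential annihilates the residual Koszul classes. The natural idea is that splitting any vertex of a vertex-symmetric hair configuration produces pairs of terms that cancel under the $\sgn_v$-antisymmetrization. A plausibly cleaner alternative, probably closer to the argument given in \cite{DGC3}, is to bypass the spectral sequence by constructing an explicit contracting homotopy that extracts an edge from a graph and reinstalls it as a hair at a distinguished endpoint, then verify $(\delta+\Delta)h+h(\delta+\Delta)=\mathrm{id}$ by careful sign bookkeeping. Either way, the most delicate combinatorial point is the compatibility between the three antisymmetrizations ($\sgn_v,\sgn_s,\sgn_2$), the connectivity requirement, and the minimum-valence condition on each vertex.
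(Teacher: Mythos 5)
The paper does not actually prove this theorem: it is imported verbatim from \cite[Proposition 6.10]{DGC3}, where the proof occupies a substantial part of that paper and proceeds through a chain of auxiliary complexes. So there is no in-paper argument to compare against, and your sketch has to stand on its own.

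It does not yet: the decisive step is missing, and you flag it yourself. After setting up the vertex-count spectral sequence (your convergence bound $v\le -d-2$ is correct) and identifying the $E^0$ differential with the Koszul differential of $m\colon u\mapsto u\wedge X$, you neither compute the $E^1$ page nor show that the later differentials kill it. Three concrete problems remain. First, the connectivity and minimal-valence constraints do not cut out a subcomplex of $\Lambda^\bullet W\otimes S^\bullet(\Lambda^2 W)$ in any way that lets you quote the Koszul computation directly, and the restriction to $s\ge 1$ (with $\Delta$ of a one-hair graph set to zero) changes the answer as well. Second, the Koszul cohomology is not ``small'': it is $\Lambda^\bullet(\K X)\otimes S^\bullet(\mathrm{coker}\,m)$ with $\dim\mathrm{coker}\,m=\binom{p}{2}-p+1$, so even after taking $\sgn_v$-isotypic parts a large $E^1$ page survives and the whole content of the theorem is the cancellation you defer to ``the natural idea that \ldots terms cancel.'' Third, the fallback homotopy $h$ with $(\delta+\Delta)h+h(\delta+\Delta)=\mathrm{id}$ is only asserted; ``extract an edge and reinstall it as a hair'' has no evident compatibility with $\delta$, and the length and structure of the actual proof in \cite{DGC3} strongly suggest no such one-step contraction exists. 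A minor but real slip: your filtration $F_p=\{v\le p\}$ is not preserved by $\delta$, which raises $v$; you need the descending filtration by vertex number for $\Delta$ to become the associated-graded differential. As written, the proposal is a reasonable plan of attack, not a proof.
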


\begin{cor}
\label{cor:qi}
The inclusion
$$
\left(\GC,\delta\right)\hookrightarrow
\left(\mH_{\geq 0}\GC,\delta+\Delta\right)
$$
is a quasi-isomorphism.
\end{cor}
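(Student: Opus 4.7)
The plan is to exhibit $(\GC,\delta)$ as a subcomplex of $(\mH_{\geq 0}\GC,\delta+\Delta)$ whose quotient is exactly the complex $(\HGC,\delta+\Delta)$ of Theorem~\ref{thm:DGC3}, and then conclude by the long exact sequence in cohomology.

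First I would verify that $\GC \subset \mH_{\geq 0}\GC$ is a subcomplex for the differential $\delta+\Delta$. The standard differential $\delta$ acts by vertex-splitting and therefore preserves the number of hairs, so it sends $\GC$ to $\GC$. On a graph $\Gamma \in \GC$ the hair set $H(\Gamma)$ is empty, so $\Delta \Gamma = 0$ trivially. Hence $(\delta+\Delta)|_{\GC}=\delta$ and $\GC$ is indeed $(\delta+\Delta)$-stable.

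Next I would identify the quotient. As a graded vector space $\mH_{\geq 0}\GC=\GC\oplus\HGC$, and the projection $\mH_{\geq 0}\GC\twoheadrightarrow\HGC$ kills exactly the subspace $\GC$. On a graph with at least one hair, $\delta$ again preserves hair number (landing in $\HGC$), while $\Delta$ drops the hair count by $1$; only when the input has a single hair does $\Delta$ output a graph in $\GC$, and that contribution is annihilated by the projection. This is precisely the convention used to define the restricted differential on $(\HGC,\delta+\Delta)$ in Subsection~\ref{ss:Delta}, so the induced differential on the quotient is exactly that complex. We therefore have a short exact sequence of complexes
\begin{equation*}
0 \longrightarrow (\GC,\delta) \longrightarrow \left(\mH_{\geq 0}\GC,\delta+\Delta\right) \longrightarrow \left(\HGC,\delta+\Delta\right) \longrightarrow 0.
\end{equation*}

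Finally, Theorem~\ref{thm:DGC3} tells us that the quotient complex $(\HGC,\delta+\Delta)$ is acyclic. The associated long exact sequence in cohomology then forces the connecting map to be zero and the inclusion to induce an isomorphism $\coH(\GC,\delta)\xrightarrow{\cong}\coH(\mH_{\geq 0}\GC,\delta+\Delta)$, which is the claim. There is no real obstacle here beyond the bookkeeping check that the projection-induced differential on $\HGC$ matches the differential appearing in Theorem~\ref{thm:DGC3}; the only subtlety is the one-hair case, where the paper's convention of setting $\Delta=0$ on $\mH_1\GC\to\HGC$ is precisely what makes the quotient identification work.
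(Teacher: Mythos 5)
Your proof is correct and follows essentially the same route as the paper: both reduce the claim to the acyclicity of $\left(\HGC,\delta+\Delta\right)$ (Theorem \ref{thm:DGC3}) via the decomposition $\mH_{\geq 0}\GC=\GC\oplus\HGC$, with the same bookkeeping about the one-hair convention. The paper packages this as a two-row spectral sequence on the mapping cone of the inclusion, while you use the short exact sequence of complexes and its long exact cohomology sequence; the two are equivalent, and your version has the minor advantage of requiring no convergence remark.
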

\begin{proof}
We have
$$
\mH_{\geq 0}\GC=\HGC\oplus \GC.
$$
On the mapping cone of the inclusion we make a simple spectral sequence of two rows: hairy and non-hairy part. On the first page in the first (hairy) row there is the complex $\left(\HGC,\delta+\Delta\right)$, being acyclic by Theorem \ref{thm:DGC3}. In the second (non-hairy) row there is a mapping cone of the identity $\left(\GC,\delta\right)\rightarrow\left(\GC,\delta\right)$, again acyclic.
The spectral sequence clearly converges correctly. This leads to the result.
\end{proof}

\subsection{Adding a hair}
\label{ss:chi}

An easy extra differential on $\HGC_{n,n}$ is adding a hair:
\begin{equation}
\chi\Gamma=\sum_{x\in V(\Gamma)}\chi_x\Gamma,
\end{equation}
where $\chi_x$ adds a hair on vertex $x$. It is straightforward to check that
\begin{equation}
\chi^2=0,
\end{equation}
\begin{equation}
(\delta+\chi)=0.
\end{equation}
On hairy complexes $\HGC_{n,n}$, including our $\HGC=\HGC_{-1,-1}$, $\chi$ is of degree 1, so upper equalities imply that $\chi$ and $(\delta+\chi)$ are differentials.

Recall that \cite[Theorem 1]{TW2} implies that
$$
(\GC_{n},\delta) \tto{\chi} \left(\HGC_{n,n}, \delta+\chi\right)
$$
is a quasi-isomorphism. Equivalently, its mapping cone
$$
\left(\mH_{\geq 0}\GC_{n,n},\delta+\chi\right)
$$
is acyclic.

The result from \cite[Theorem 1]{TW2} is much stronger than the upper insertions\footnote{It states also that $(\HGC_{n,n},\delta)$ can be split into two sub-complexes such that $\chi$ sends one sub-complex into another.}, and we will not need its full strength here.
Therefore, for completeness, and in order not to deal with technicalities of different valence conditions (c.f.\ Remark \ref{rem:val}), we write a separate simple proof here.

\begin{prop}
\label{prop:chi}
The complex
$$
\left(\mH_{\geq 0}\GC,\chi\right)
$$
is acyclic.
\end{prop}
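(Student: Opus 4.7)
The plan is to construct an explicit Koszul-type contracting homotopy for $\chi$. First I would observe that $\chi$ preserves both the number of vertices $v$ and the number of edges $e$, so the complex $\left(\mH_{\geq 0}\GC,\chi\right)$ splits as a direct sum of subcomplexes indexed by $(v,e)$ with $v\geq 1$. It suffices to prove acyclicity of each summand, and the inequality $v\geq 1$ is what will ultimately make the homotopy work.

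Next I would recognize each fixed-$(v,e)$ summand as a Koszul complex. Fix an underlying non-hairy graph $G$, obtained by forgetting all hairs. Because adding or removing hairs does not change edges, the valence condition ``each vertex is adjacent to at least two edges'' from Remark \ref{rem:val} is preserved under $\chi$ and under hair removal, so a further decomposition by $G$ is legitimate. The fiber over $G$ records only the hair placement on $V(G)$, and after $\sym_s$-antisymmetrization this fiber is naturally the exterior algebra $\Lambda^\bullet V(G)$; under this identification $\chi$ becomes exterior multiplication by $\xi := \sum_{x \in V(G)} x$. A contracting homotopy is then provided by contraction $h := \iota_{\sum_x x^*}$, satisfying
$$
\chi h + h\chi \;=\; \left(\textstyle\sum_x x^*\right)\!(\xi)\cdot\id \;=\; v\cdot\id
$$
by the standard Leibniz rule for $\iota$ and $\wedge$. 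Dividing by $v$ in each fixed-vertex summand thus produces a genuine contracting homotopy and finishes the proof.

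In graph-theoretic language, $h$ is the operator ``remove a hair, summed over all hairs with Koszul signs'', concretely $h(\Gamma) = \sum_{t=1}^{s}(-1)^{s-t}\Gamma_{-t}$, where $\Gamma_{-t}$ denotes the graph obtained by deleting hair $t$ and renumbering. The main obstacle I expect is the sign bookkeeping: verifying that $h$ descends to the $\sym_s$-(co)invariants and that the combinatorial identity $\chi h + h\chi = v\cdot\id$ really holds graph-by-graph. The delicate point is the cancellation of ``mixed'' terms, namely ``add a hair at a non-hair vertex $x$, then remove an existing hair at $y_t$'' against ``remove the hair at $y_t$, then add a hair at $x$'': with the signs above these produce the same underlying graph with opposite overall signs $(-1)^{s+1-t}$ and $(-1)^{s-t}$ and so cancel, leaving only the two ``identity'' contributions $(v-s)\Gamma + s\Gamma = v\Gamma$. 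The Koszul picture of the previous paragraph guarantees this computation must work out; it is routine but easy to mis-sign without that structural guide.
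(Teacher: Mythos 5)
Your proof is correct and is essentially the paper's own argument: the paper defines the homotopy $\beta$ as ``delete a hair, summed over all hairs'' and verifies the same identity $\chi\beta+\beta\chi=v\cdot\id$, concluding acyclicity since $v\geq 1$. Your Koszul-complex framing (fixing the underlying graph $G$ and identifying the hair data with $\Lambda^\bullet V(G)$, so that $\chi$ is wedging with $\sum_x x$ and $h$ is the contraction) is a clean structural justification of the sign bookkeeping that the paper leaves to the reader, but it is the same homotopy and the same computation.
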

\begin{proof}
Differential $\chi$ adds a hair to a vertex, summed over all (non-hairy) vertices. Let $\beta:\mH_{\geq 0}\GC\rightarrow\mH_{\geq 0}\GC$ be the sum over all hairs of deleting that hair. One easily sees that
$$
(\chi\beta+\beta\chi)\Gamma=v\Gamma
$$
where $v$ is the number of vertices in $\Gamma$. This implies the result.
\end{proof}

\begin{cor}
\label{cor:delta+chi}
The complex
$$
\left(\mH_{\geq 0}\GC,\delta+\chi\right)
$$
is acyclic.
\end{cor}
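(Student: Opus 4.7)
The plan is a standard spectral-sequence argument that upgrades Proposition~\ref{prop:chi} to the perturbed differential $\delta+\chi$. I will filter $\mH_{\geq 0}\GC$ by vertex count via the decreasing filtration
$$
F^p := \bigl\{\Gamma \in \mH_{\geq 0}\GC : v(\Gamma) \geq p\bigr\}.
$$
Since $\chi$ does not change the vertex count while $\delta$ increases it by exactly one (vertex splitting produces two vertices from one), we have $\chi(F^p) \subseteq F^p$ and $\delta(F^p) \subseteq F^{p+1}$. Consequently $\delta+\chi$ respects the filtration, and the induced differential on the associated graded piece $F^p/F^{p+1}$ is just $\chi$ acting on graphs with exactly $p$ vertices.

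The key observation is that the contracting homotopy $\beta$ produced in the proof of Proposition~\ref{prop:chi} only deletes a hair, so it preserves the vertex count. Hence the identity $\chi\beta+\beta\chi = v\cdot\mathrm{id}$ restricts to each $F^p/F^{p+1}$ with $v=p$; for $p\geq 1$ this yields acyclicity piece by piece, while for $p\leq 0$ the piece is empty because every graph has at least one vertex. Thus the $E_1$ page of the spectral sequence vanishes.

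The only point that deserves a short check is convergence. Using the valence condition recalled in Remark~\ref{rem:val}---each non-hairy vertex of valence $\geq 2$ and each hairy vertex of valence $\geq 3$ (including its hair)---a double-counting of half-edges gives $2e+s\geq 2(v-s)+3s = 2v+s$, hence $e\geq v$. Plugging this into $d=-1+v-2e-s$ yields $v+s\leq -d-1$, so in each fixed total degree both $v$ and $s$ are bounded. The filtration therefore has finite length in every degree, the spectral sequence converges, and the vanishing of $E_1$ forces $\coH(\mH_{\geq 0}\GC,\delta+\chi)=0$.

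I do not anticipate a substantive obstacle here: once the correct filtration is identified, the argument is routine bookkeeping. The only subtle ingredient is noticing that the homotopy $\beta$ from Proposition~\ref{prop:chi} is automatically compatible with the filtration---a point that is immediate from its definition but essential for the argument to descend to the associated graded.
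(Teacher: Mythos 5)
Your argument is correct and essentially the same as the paper's: the paper also runs a spectral sequence whose first-page differential is $\chi$ (it filters by the number of edges rather than vertices, which makes no difference since $\delta$ raises both by exactly one while $\chi$ fixes both) and invokes Proposition~\ref{prop:chi}, with convergence secured by finite-dimensionality in each fixed degree. Your additional check that the homotopy $\beta$ preserves the filtration is a slightly more explicit way of extracting from Proposition~\ref{prop:chi} exactly what the paper uses.
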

\begin{proof}
On the complex we make the spectral sequence on the number of edges $e$. On the first page there is the cohomology of $\left(\mH_{\geq 0}\GC,\chi\right)$, being acyclic by Proposition \ref{prop:chi}.

For a graph to be connected it must hold that $e\geq v-1$. Therefore, in the fixed degree $d$ we have
$$
d=1-v+2e+h\geq e+h.
$$
In particular $d\geq e$, $d\geq h$ and $d+1\geq v$. So, in the fixed degree, number of vertices, edges and hairs are all bounded, and therefore there are only finitely many possible graphs. So, the complex is finitely dimensional in each degree and standard spectral sequence arguments (cf.\ \cite[Chapter 5]{Wei} or \cite[Appendix C]{DGC1}) imply the correct convergence. Hence the result follows.
\end{proof}

\subsection{Combining two extra differentials}
\label{ss:main}
One easily checks that connecting a hair $\Delta$ and adding a hair $\chi$ anti-commute, so $\Delta+\chi$ and $\delta+\Delta+\chi$ are also differentials on $\mH_{\geq 0}\GC$.

\begin{cor}
The complex
$$
\left(\mH_{\geq 0}\GC,\delta+\Delta+\chi\right)
$$
is acyclic.
\end{cor}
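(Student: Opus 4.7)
The plan is to run a spectral sequence argument completely analogous to the one used in the proof of Corollary \ref{cor:delta+chi}. I filter $\mH_{\geq 0}\GC$ by the number of edges, setting $F^p := \{\Gamma : e(\Gamma) \geq p\}$, which is a decreasing filtration. Both $\delta$ (which splits a vertex, producing one new edge) and $\Delta$ (which turns a hair into an edge) raise the edge count by exactly one, whereas $\chi$ (which adds a hair) leaves it unchanged. Consequently $F^p$ is preserved by $\delta + \Delta + \chi$, and on the associated graded $E_0$ only the $\chi$ piece survives as $d_0$.

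Next, I would observe that the $E_1$ page coincides with $H(\mH_{\geq 0}\GC, \chi)$, which vanishes by Proposition \ref{prop:chi}. The contracting homotopy $\beta$ from that proof (deleting a hair) does not alter the edge count, so the identity $\chi\beta + \beta\chi = v \cdot \mathrm{id}$ restricts to each $E_0^p = F^p/F^{p+1}$, yielding acyclicity term by term. Hence $E_1 = 0$ and the spectral sequence collapses at once.

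Finally, to conclude I verify convergence, again following the recipe from Corollary \ref{cor:delta+chi}. The degree formula $d = -1 + v - 2e - s$ combined with the connectedness inequality $e \geq v - 1$ and the $2$-valence inequality $2e + s \geq 2v$ bounds $v$, $e$, and $s$ simultaneously once the total degree is fixed. Thus the complex is finite-dimensional in each degree, the filtration by $e$ is bounded on each degree piece, and standard spectral-sequence convergence (as cited in the proof of Corollary \ref{cor:delta+chi}) gives $E_\infty = 0$. Acyclicity of $(\mH_{\geq 0}\GC, \delta + \Delta + \chi)$ follows.

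I do not foresee any serious obstacle: the whole argument is a two-page-of-the-spectral-sequence combination of two results already established in the paper. The only point requiring a line of care is that Proposition \ref{prop:chi} really does apply to each fixed-$e$ subcomplex separately, but this is immediate since the homotopy $\beta$ preserves the number of edges.
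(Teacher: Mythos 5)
Your proposal is correct and is essentially the paper's own argument: the paper proves this corollary by saying the proof is exactly the same as that of Corollary \ref{cor:delta+chi}, namely the spectral sequence on the number of edges with $\chi$ as the induced differential on the associated graded, acyclicity on the first page from Proposition \ref{prop:chi}, and convergence from finite-dimensionality in each degree. Your extra remarks (that $\delta$ and $\Delta$ each raise the edge count by one while $\chi$ preserves it, and that the homotopy $\beta$ preserves the edge count so Proposition \ref{prop:chi} applies filtration-step by filtration-step) just make explicit what the paper leaves implicit.
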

\begin{proof}
The proof is exactly the same as the one of Corollary \ref{cor:delta+chi}.
\end{proof}

\begin{thm}
\label{thm:main}
There is a spectral sequence converting to
$$
H\left(\mH_{\geq 0}\GC,\delta+\Delta+\chi\right)=0
$$
whose term on the second page is
$$
H\left(\mH_{\geq 0}\GC,\delta+\Delta\right)=
H\left(\GC,\delta\right).
$$
\end{thm}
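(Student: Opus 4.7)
My plan is to build the required spectral sequence by filtering $(\mH_{\geq 0}\GC,\delta+\Delta+\chi)$ by the integer $q := -v+e+s$ (equivalently, loop order plus number of hairs minus one). A direct bookkeeping check on the effect of each piece on the triple $(v,e,s)$ shows: $\delta$ raises both $v$ and $e$ by $1$ and so preserves $q$; $\Delta$ raises $e$ by $1$ while lowering $s$ by $1$ and so preserves $q$; and $\chi$ raises $s$ by $1$ and thus raises $q$ by $1$. Consequently the descending filtration $F^p := \prod_{q \geq p} \mV_v\mE_e\mH_s\GC$ is respected by $D := \delta+\Delta+\chi$, and the part of $D$ that survives on the associated graded is exactly $d_0 = \delta+\Delta$.

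Since $\delta+\Delta$ itself preserves the $q$-grading, the complex $(\mH_{\geq 0}\GC,\delta+\Delta)$ splits as a direct sum of its $q$-homogeneous sub-complexes, so
\begin{equation*}
E_1 = H(\mathrm{gr},\,d_0) = H(\mH_{\geq 0}\GC,\,\delta+\Delta) = H(\GC,\delta),
\end{equation*}
the last equality being Corollary \ref{cor:qi}. Next, $d_1$ on $E_1$ is represented by applying $\chi$ to any $(\delta+\Delta)$-cocycle. Given a class $[x] \in E_1$, the same corollary lets us choose a representative $\Gamma \in \GC$ with $\delta\Gamma = 0$ (so $\Delta\Gamma = 0$ trivially). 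Then $\chi\Gamma \in \HGC$ is itself a $(\delta+\Delta)$-cocycle, since $\chi$ anti-commutes with both $\delta$ and $\Delta$; and Theorem \ref{thm:DGC3} writes it as a $(\delta+\Delta)$-coboundary inside $\HGC$. Hence $[\chi\Gamma] = 0$ in $E_1$, giving $d_1 = 0$ and therefore $E_2 = E_1 = H(\GC,\delta)$, exactly as claimed.

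Finally I would verify convergence: in each fixed total degree, the valence and connectedness conditions force $v$, $e$, $s$ (and hence $q$) to take only finitely many values, exactly as in the proof of Corollary \ref{cor:delta+chi}, so the filtration is bounded in each degree and the spectral sequence converges to $H(\mH_{\geq 0}\GC,\delta+\Delta+\chi)=0$ by the Corollary immediately preceding the theorem. The conceptually nontrivial step is the vanishing $d_1=0$, as this is where Theorem \ref{thm:DGC3} enters with real content; the rest of the argument is the combinatorial bookkeeping needed to single out $q=-v+e+s$ as the right filtrant and the standard boundedness check for convergence.
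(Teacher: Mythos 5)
Your choice of filtration quantity $q=-v+e+s$, the identification of the associated-graded differential with $\delta+\Delta$, the computation of that page via Corollary \ref{cor:qi}, and the boundedness/convergence argument all match the paper's proof (the paper writes the quantity as $e-v-h$, which preserves $\delta+\Delta$ only if read with your sign on the hair count, so your bookkeeping is the correct one). The paper stops there: the ``second page'' in the statement is the page you call $E_1$, i.e.\ the cohomology of the associated graded, and the paper makes no claim that the differential on that page vanishes.

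The genuine gap is your assertion that $d_1=0$. The step ``Theorem \ref{thm:DGC3} writes $\chi\Gamma$ as a $(\delta+\Delta)$-coboundary inside $\HGC$, hence $[\chi\Gamma]=0$ in $E_1$'' is invalid, because $(\HGC,\delta+\Delta)$ is defined with the \emph{restricted and projected} differential: the hairless component of $\Delta$ applied to a one-hair graph is discarded. Thus $(\HGC,\delta+\Delta)$ is a quotient complex of $(\mH_{\geq 0}\GC,\delta+\Delta)$, not a subcomplex, and exactness there does not lift. Concretely, if $\chi\Gamma=\delta\eta+\Delta\eta \pmod{\GC}$ with $\eta\in\HGC$, then in $\mH_{\geq 0}\GC$ one only obtains $\chi\Gamma=(\delta+\Delta)\eta-\gamma$, where $\gamma\in\GC$ is the hairless part of $\Delta\eta$; hence $[\chi\Gamma]=-[\gamma]$, a generally nonzero class of $H(\GC,\delta)$. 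The appendix computes exactly such an instance: with $\Gamma=L_3$ and $\eta$ the hairy theta graph one gets $[\chi L_3]=-3[\Theta]\neq 0$. The nonvanishing of this induced differential is the entire point of the theorem --- it is what produces the cancellations drawn in Table \ref{tbl:GC-1} --- so the conclusion that the page $H(\GC,\delta)$ carries zero differential is false, not merely unproven. The fix is to drop the $d_1$ computation altogether: read with the paper's page-numbering convention, the statement requires nothing beyond your first and third paragraphs.
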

\begin{proof}
The spectral sequence is on number $e-v-h$. It converges correctly because of the same reason as in the proof of Corollary \ref{cor:delta+chi}.
On the first page there is cohomology of $\left(\mH_{\geq 0}\GC,\delta+\Delta\right)$, being equal to cohomology of $\left(\GC,\delta\right)$ by Corollary \ref{cor:qi}.
\end{proof}

Table \ref{tbl:GC-1} represents the second page of the spectral sequence from Theorem \ref{thm:main}. With the corollary we have proven that there is indeed cancelling as depicted by the arrows.
In Table \ref{tbl:wfall} we repeat the same cancellations of cohomological classes of shifted complex $\left(\GC_{1},\delta\right)$, together with already known cancellations on that complex from \cite[Section 4]{DGC1} depicted in Table \ref{tbl:GC1}.

\begin{table}[h]
$$
\begin{tikzpicture}[every node/.style={align=center,anchor=base,text width=2.15ex,nodes={fill=red!30}}]
\matrix (mag) [matrix of nodes,ampersand replacement=\&,row 1/.style={nodes={fill=white}},row 3/.style={nodes={fill=green!30}},column 1/.style={nodes={fill=white}}]
{
{}\& 0 \& 1 \& 2 \& 3 \& 4 \& 5 \& 6 \& 7 \& 8 \& 9 \&10 \&11 \&12 \&13 \&14 \&15 \&16 \&17 \&18 \&19 \&20 \&21 \&22\\
-1\& 0 \& 0 \& 0 \& 0 \& 0 \& 0 \& 0 \& 0 \& 0 \& 0 \& 0 \& 0 \& 0 \& 0 \& 0 \& 0 \& 0 \& 0 \& 0 \& 0 \& 0 \& 0 \& 0 \\
|[fill=white]| 0 \& 0 \& 0 \& 1 \& 0 \& 0 \& 0 \& 1 \& 0 \& 0 \& 0 \& 1 \& 0 \& 0 \& 0 \& 1 \& 0 \& 0 \& 0 \& 1 \& 0 \& 0 \& 0 \& 1 \\
1 \& 0 \& |[fill=green!30]| 1 \& 0 \& 0 \& 0 \& 0 \& 0 \& 0 \& 0 \& 0 \& 0 \& 0 \& 0 \& 0 \& 0 \& 0 \& 0 \& 0 \& 0 \& 0 \& 0 \& 0 \& 0 \\
2 \& 0 \& 0 \& 0 \& |[fill=green!30]| 1 \& 0 \& 0 \& 0 \& 0 \& 0 \& 0 \& 0 \& 0 \& 0 \& 0 \& 0 \& 0 \& 0 \& 0 \& 0 \& 0 \& 0 \& 0 \& 0 \\
3 \& 0 \& 0 \& 0 \& 0 \& |[fill=white]| 0 \& |[fill=green!30]| 1 \& 0 \& 0 \& 0 \& 0 \& 0 \& 0 \& 0 \& 0 \& 0 \& 0 \& 0 \& 0 \& 0 \& 0 \& 0 \& 0 \& 0 \\
4 \& 0 \& 0 \& 0 \& 0 \& 0 \& |[fill=white]| 0 \& |[fill=white]| 0 \& |[fill=green!30]| 2 \& 0 \& 0 \& 0 \& 0 \& 0 \& 0 \& 0 \& 0 \& 0 \& 0 \& 0 \& 0 \& 0 \& 0 \& 0 \\
5 \& 0 \& 0 \& 0 \& 0 \& 0 \& 0 \& |[fill=white]| 1 \& |[fill=white]| 0 \& |[fill=white]| 0 \& |[fill=green!30]| 2 \& 0 \& 0 \& 0 \& 0 \& 0 \& 0 \& 0 \& 0 \& 0 \& 0 \& 0 \& 0 \& 0 \\
6 \& 0 \& 0 \& 0 \& 0 \& 0 \& 0 \& 0 \& |[fill=white]| 0 \& |[fill=white]| 1 \& |[fill=white]| 0 \& |[fill=white]| 0 \& |[fill=green!30]| 3 \& 0 \& 0 \& 0 \& 0 \& 0 \& 0 \& 0 \& 0 \& 0 \& 0 \& 0 \\
7 \& 0 \& 0 \& 0 \& 0 \& 0 \& 0 \& 0 \& 0 \& |[fill=white]| 0 \& |[fill=white]| 0 \& |[fill=white]| 2 \& |[fill=white]| 0 \& |[fill=white]| 0 \& |[fill=green!30]| 4 \& 0 \& 0 \& 0 \& 0 \& 0 \& 0 \& 0 \& 0 \& 0 \\
8 \& 0 \& 0 \& 0 \& 0 \& 0 \& 0 \& 0 \& 0 \& 0 \& |[fill=white]| 0 \& |[fill=white]| ? \& |[fill=white]| ? \& |[fill=white]| ? \& |[fill=white]| ? \& |[fill=white]| ? \& |[fill=green!30]| 5 \& 0 \& 0 \& 0 \& 0 \& 0 \& 0 \& 0 \\
9 \& 0 \& 0 \& 0 \& 0 \& 0 \& 0 \& 0 \& 0 \& 0 \& 0 \& |[fill=white]| ? \& |[fill=white]| ? \& |[fill=white]| ? \& |[fill=white]| ? \& |[fill=white]| ? \& |[fill=white]| ? \& |[fill=white]| ? \& |[fill=green!30]| 6 \& 0 \& 0 \& 0 \& 0 \& 0 \\
10\& 0 \& 0 \& 0 \& 0 \& 0 \& 0 \& 0 \& 0 \& 0 \& 0 \& 0 \& |[fill=white]| ? \& |[fill=white]| ? \& |[fill=white]| ? \& |[fill=white]| ? \& |[fill=white]| ? \& |[fill=white]| ? \& |[fill=white]| ? \& |[fill=white]| ? \& |[fill=green!30]| 8 \& 0 \& 0 \& 0 \\
11\& 0 \& 0 \& 0 \& 0 \& 0 \& 0 \& 0 \& 0 \& 0 \& 0 \& 0 \& 0 \& |[fill=white]| ? \& |[fill=white]| ? \& |[fill=white]| ? \& |[fill=white]| ? \& |[fill=white]| ? \& |[fill=white]| ? \& |[fill=white]| ? \& |[fill=white]| ? \& |[fill=white]| ? \& |[fill=green!30]| 9 \& 0 \\
};

\draw (mag-2-1.north west) -- (mag-1-24.south east);
\draw (mag-1-2.north west) -- (mag-14-1.south east);

\draw (mag-3-4) edge[-latex, thick,dotted] (mag-5-5);
\draw (mag-6-7) edge[-latex, thick,dotted] (mag-8-8);
\draw (mag-3-8) edge[-latex, thick,dotted] (mag-7-9);
\draw (mag-7-9) edge[-latex, thick,dotted] (mag-9-10);
\draw (mag-8-11) edge[-latex, thick,dotted] node[above]{$?$} (mag-10-12);
\draw (mag-3-12) edge[-latex, thick,dotted] node[above]{$?$} (mag-9-13);
\draw (mag-3-16) edge[-latex, thick,dotted] node[above]{$?$} (mag-11-17);
\draw (mag-3-20) edge[-latex, thick,dotted] node[above]{$?$} (mag-13-21);

\draw[-latex, thick] (mag-3-4) edge (mag-4-3.center);
\draw[-latex, thick] (mag-3-8) edge (mag-5-5.center);
\draw[-latex, thick] (mag-3-12) edge (mag-6-7.center);
\draw[-latex, thick] (mag-3-16) edge (mag-7-9.center);
\draw[-latex, thick] (mag-3-20) edge (mag-8-11.center);
\draw[-latex, thick] (mag-3-24) edge (mag-9-13.center);
\draw[-latex, thick] (mag-7-9) edge (mag-8-8.center);
\draw[-latex, thick] (mag-8-11) edge (mag-9-10.center);
\draw[-latex, thick] (mag-9-13) edge (mag-10-12.center);
\end{tikzpicture}
$$
\caption{\label{tbl:wfall} The table of $H\left(\GC_{1},\delta\right)$.
Solid arrows represent cancellations induced by Theorem \ref{thm:main}, while dotted arrows represent cancellations from \cite[Section 4]{DGC1}. Conjectural cancellations are indicated with question mark.}
\end{table}

\appendix

\section{An example of explicit cancelling}

In the appendix we will write explicit way of the first cancelling, where the loop class generated by
$$
L_3:=
\begin{tikzpicture}[baseline=-.5ex,scale=.4]
\node[int] (b) at (90:1) {};
\node[int] (c) at (210:1) {};
\node[int] (d) at (330:1) {};
\draw (b) edge (c);
\draw (b) edge (d);
\draw (c) edge (d);
\end{tikzpicture}
$$
is cancelled with Theta class generated by
$$
\Theta:=
\begin{tikzpicture}[baseline=-.5ex,scale=.7]
\node[int] (a) at (0,0) {};
\node[int] (b) at (1,0) {};
\draw (a) edge (b);
\draw (a) edge[bend left=40] (b);
\draw (a) edge[bend right=40] (b);
\end{tikzpicture}\,.
$$

The graph $L_3\in\GC\subset\mH_{\geq 0}\GC$ generates a class in $\coH\left(\mH_{\geq 0}\GC,\delta+\Delta\right)$. On the second page of the spectral sequence of Theorem \ref{thm:main} that graph is mapped to
$$
\chi(L_3)=3\,
\begin{tikzpicture}[baseline=-.5ex,scale=.4]
\node[int] (b) at (90:1) {};
\node[int] (c) at (210:1) {};
\node[int] (d) at (330:1) {};
\draw (b) edge (0,1.6);
\draw (b) edge (c);
\draw (b) edge (d);
\draw (c) edge (d);
\end{tikzpicture}.
$$
The spectral sequence ends here, so $\chi(L_3)$ represents a class in $\coH\left(\mH_{\geq 0}\GC,\delta+\Delta\right)$. However, $\chi(L_3)$ has a hair, so it is not in the image of the inclusion $(\GC,\delta)\hookrightarrow\left(\mH_{\geq 0}\GC,\delta+\Delta\right)$.
To find what is a representative of this class in $\coH(\GC,\delta)$, we need to find another representative of the class $[\chi(L_3)]$ that is hairless. Indeed, we have
$$
(\delta+\Delta)\;
\begin{tikzpicture}[baseline=-.5ex,scale=.7]
\node[int] (a) at (0,0) {};
\node[int] (b) at (1,0) {};
\draw (b) edge (1.3,0);
\draw (a) edge[bend left=40] (b);
\draw (a) edge[bend right=40] (b);
\end{tikzpicture}
=
\begin{tikzpicture}[baseline=-.5ex,scale=.4]
\node[int] (b) at (90:1) {};
\node[int] (c) at (210:1) {};
\node[int] (d) at (330:1) {};
\draw (b) edge (0,1.6);
\draw (b) edge (c);
\draw (b) edge (d);
\draw (c) edge (d);
\end{tikzpicture}
+
\begin{tikzpicture}[baseline=-.5ex,scale=.7]
\node[int] (a) at (0,0) {};
\node[int] (b) at (1,0) {};
\draw (a) edge (b);
\draw (a) edge[bend left=40] (b);
\draw (a) edge[bend right=40] (b);
\end{tikzpicture}\,,
$$
so $\chi(L_3)\sim -3\Theta$.
Therefore, the class $[L_3]$ cancels the class $-3[\Theta]$.

\end{document}